\documentclass[11pt,a4paper,english]{amsart}
\usepackage[utf8]{inputenc}
\usepackage[english]{babel} 

\usepackage{amsmath} 
\usepackage{amsthm} 
\usepackage{graphicx} 
\usepackage{xcolor} 
\usepackage{amsfonts}
\usepackage[biblabel]{cite}
\usepackage{bm} 
\usepackage[hidelinks]{hyperref} 
\usepackage{amssymb} 
\usepackage{tikz-cd} 
\usepackage{amscd}
\usepackage{etoolbox}
\patchcmd{\thmhead}{(#3)}{#3}{}{}

\setcounter{MaxMatrixCols}{20}

\evensidemargin 3ex \oddsidemargin 3ex
\textwidth=15cm
\setlength{\marginparwidth}{2cm}

\DeclareMathOperator{\depth}{depth} 
\DeclareMathOperator{\height}{ht} 
\DeclareMathOperator{\ev}{ev} 

\newcommand{\F}{{\mathbb{F}}}
\newcommand{\fq}{\mathbb{F}_q}
\newcommand{\PP}{{\mathbb{P}}}
\newcommand{\PM}{{\mathbb{P}^{m-1}}}
\newcommand{\X}{{\mathbb{X}}}

\usepackage{mathtools} 
\DeclarePairedDelimiter\abs{\lvert}{\rvert}%
\DeclarePairedDelimiter\norm{\lVert}{\rVert}%

\makeatletter
\let\oldabs\abs
\def\abs{\@ifstar{\oldabs}{\oldabs*}}
\let\oldnorm\norm
\def\norm{\@ifstar{\oldnorm}{\oldnorm*}}
\makeatother


\newtheorem{thm}{Theorem}[section]
\newtheorem{prop}[thm]{Proposition}
\newtheorem{cor}[thm]{Corollary}
\newtheorem{lem}[thm]{Lemma}
\theoremstyle{definition}

\newtheorem{rem}[thm]{Remark} 
\newtheorem{eje}[thm]{Example} 
\newtheorem{ex}[thm]{Example}

\title[Saturation and vanishing ideals]{Saturation and vanishing ideals}
\author{Philippe Gimenez, Diego Ruano and Rodrigo San-José}
\curraddr{
\texttt{Philippe Gimenez, Diego Ruano, Rodrigo San-José:} IMUVA-Mathematics Research Institute, Universidad de Valladolid, 47011 Valladolid (Spain).
}
\email{pgimenez@uva.es;  diego.ruano@uva.es; rodrigo.san-jose@uva.es}
\date{}
\thanks{This work was supported in part by the following grants:
PID2019-104844GB-I00 funded by MCIN/AEI/10.13039/501100011033,
PGC2018-096446-B-C21 funded by MCIN/AEI/10.13039/501100011033 and ``ERDF A way of making Europe'', RYC-2016-20208 funded by MCIN/AEI/10.13039/501100011033 and ``ESF Investing in your future'', and FPU20/01311 funded by the Spanish Ministry of Universities.}

\subjclass[2020]{Primary: 13P25. Secondary: 13M10, 94B27}

\keywords{Projective codes, evaluation codes, vanishing ideal, saturation, radical}

\begin{document}

\begin{abstract}
We consider an homogeneous ideal $I$ in the polynomial ring $S=K[x_1,\dots,$ $x_m]$ over a finite field $K=\fq$ and the finite set of projective rational points $\X$ that it defines in the projective space $\PP^{m-1}$. We concern ourselves with the problem of computing the vanishing ideal $I(\X)$. This is usually done by adding the equations of the projective space $I(\PP^{m-1})$ to $I$ and computing the radical. We give an alternative and more efficient way using the saturation with respect to the homogeneous maximal ideal.
\end{abstract}

\maketitle

\begin{flushright}
{\em Dedicated to Rafael Villarreal on his 70th Birthday.}
\end{flushright}

\null\vskip 15pt

\section{Introduction}\label{se:uno}

The aim of this paper is to compute the vanishing ideal of a finite set of points in the projective space. The motivation comes from Coding Theory, in which some projective codes are defined using these type of ideals. In the affine case, the computation of the vanishing ideal of a finite set of points is straightforward, but the projective case poses some additional problems. It is known that the vanishing ideal can be obtained computing the radical of a certain ideal, and we show that it can also be obtained computing the saturation with respect to the homogeneous maximal ideal, which is more efficient.

Let $K=\mathbb{F}_q$ be a finite field, and let $S=K[x_1,\dots,x_m]$ be the polynomial ring with standard grading. Let $I\subset S$ be an ideal. We denote by $X=V_{\fq}(I)=\{P_1,\dots,P_n\}\subset \mathbb{A}^m$ the finite set of rational points in which all the polynomials of $I$ vanish. Then we can consider the vanishing ideal of $X$, $I(X)$. With this notation we define the following evaluation map:
$$
\ev_X:S/I(X) \rightarrow \mathbb{F}_q^n ,\:\: f+I(X)\mapsto \left(f(P_1),\dots,f(P_n)\right).
$$

By the definition of $I(X)$, this evaluation map is an isomorphism of $\mathbb{F}_q$-vector spaces. If we consider $L$ a vector subspace of $S/I(X)$, we can define the \textit{affine variety code} $C(I,L)$ as the image of $L$ under the evaluation map $\ev_X$. That is:
$$
C(I,L)=\ev_X(L)=\{ \ev_X(f+I(X))\mid f+I(X)\in L \}.
$$

In the light of this definition one may wonder how to compute the ideal $I(X)$. In this affine setting, the answer is quite straightforward. The ideal $I_q=I+\langle x_1^q-x_1,\dots,x_m^q-x_m \rangle$ satisfies
$$
V_{\overline{\mathbb{F}_q}}(I_q)=V_{\mathbb{F}_q}(I_q)=V_{\mathbb{F}_q}(I)=V_{\mathbb{F}_q}(I(X))=X.
$$

By Seidenberg's Lemma \cite[Prop. 3.7.15]{kreuzer1}, $I_q$ is radical. Hence, in this case $I_q=I(X)$ and we obtain the vanishing ideal directly.

Following a similar idea, one can consider evaluation codes over the projective space $\mathbb{P}^{m-1}$.  Let $I\subset S$ be an homogeneous ideal. Again, we consider $\mathbb{X}=V_{\PP^{m-1}}(I)=\{[P_1],\dots,$ $[P_n]\}\subset \mathbb{P}^{m-1}$ the finite set of projective points defined by $I$ with representatives $P_i$. Denoting the vanishing ideal of $\X$ by $I(\mathbb{X})$, we can define the following K-linear map for each degree $d$:
$$
\ev_d:S_d \rightarrow K^{n},\:\: f\mapsto \left(\frac{f(P_1)}{f_1(P_1)},\dots,\frac{f(P_n)}{f_n(P_n)}\right),
$$
where $f_i\in S_d$ are fixed homogeneous polynomials verifying $f_i(P_i)\neq 0$. Then the image of $S_d$ under $\ev_d$, denoted by $C_\mathbb{X}(d)$, is called a \textit{projective Reed-Muller type code} of degree $d$ on $\mathbb{X}$. By definition, $I(\mathbb{X})_d=\ker \ev_d$. Thus, $S_d/I(\X)_d\cong C_\mathbb{X}(d)$. It can easily be checked that the basic parameters of the code (length, dimension and minimum distance) do not depend on the choice of the polynomials $f_i$. These codes have been studied in various contexts \cite{carvalho,nestedcartesian,sorensen,geramita,gonzalez}.

In order to compute $I(\X)$, as in the affine case, a natural idea would be to add the equations of the projective space to the ideal $I$, and check whether the resulting ideal is radical. These equations correspond to the generators of the vanishing ideal of the set of all points in $\mathbb{P}^{m-1}$ \hspace{1sp}\cite{mercier}:
$$
I(\mathbb{P}^{m-1})=\langle \{x_i^qx_j-x_ix_j^q, 1\leq i <j\leq m \} \rangle.
$$

We can define $I_q=I+I(\mathbb{P}^{m-1})$ and, as before, if this ideal were radical, then it would be equal to $I(\mathbb{X})$. However, this ideal is not radical in general. In fact, we have observed that this ideal is radical only in very specific cases. In general, computing the radical may be computationally intensive. Thus, it is an interesting problem to find an easier way to compute $I(\X)$.

In Theorem \ref{sat}, we prove that we can compute the vanishing ideal $I(\X)$ using the saturation with respect to the homogeneous maximal ideal:
$$
I(\X)=(I+I(\PP^{m-1})):\mathfrak{m}^\infty).
$$

We then ask ourselves if there are many cases in which there is no need to use the saturation, i.e., $I+I(\PP^{m-1})=I(\X)$. The answer is that this rarely happens, because it is equivalent to the question of whether $I_q$ is radical or not. Following this direction, in Proposition \ref{impos}, we show that there are finite sets of points $\X\subset \PP^{m-1}$ such that there is no ideal $I\subset S$, besides $I(\X)$, such that $I+I(\PP^{m-1})=I(\X)$. 

\section{Main result}
Before providing the main result, we recall some well known results. The first one is often referred as \textit{additivity of the degree}.
\begin{prop}[\hspace{1sp}{\cite[Lem. 5.3.11]{greuel}}]\label{aditividadgrado}
Let $I\subset S$ be an homogeneous ideal and let $I=\mathfrak{q}_1\cap\cdots\cap \mathfrak{q}_m$ be its irredundant primary decomposition. Then
$$
\deg(S/I)=\sum_{\height(\mathfrak{q}_i)=\height(I)}\deg(S/\mathfrak{q}_i).
$$
\end{prop}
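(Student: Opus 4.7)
The plan is to establish this via Hilbert polynomials combined with a prime filtration of $S/I$. Recall that for a finitely generated graded $S$-module $M$ of Krull dimension $d$, the Hilbert polynomial $P_M(t)$ has degree $d-1$, and $\deg(M)=(d-1)!\,c$, where $c$ is the leading coefficient of $P_M(t)$. Since $\dim S/I = m-\height(I)$ and $\dim S/\mathfrak{q}_i = m-\height(\mathfrak{q}_i)$, only the components of minimal height can contribute to the leading coefficient, so the restriction on the sum in the statement already matches what a Hilbert-polynomial argument would force.

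First, I would invoke the standard fact that $S/I$ admits a finite filtration $0 = M_0 \subset M_1 \subset \cdots \subset M_N = S/I$ by graded submodules whose successive quotients are isomorphic to $(S/\mathfrak{p}_j)(-a_j)$ for certain $\mathfrak{p}_j \in \Supp(S/I)$ and integer shifts $a_j$. Hilbert polynomials are additive on short exact sequences, and a shift only affects lower-order terms. Extracting leading coefficients gives $\deg(S/I)=\sum_j \deg(S/\mathfrak{p}_j)$, where the sum runs over those $j$ with $\dim S/\mathfrak{p}_j = \dim S/I$, equivalently those $\mathfrak{p}_j$ that are minimal primes of $I$ of minimal height.

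Next, I would count how many times each such prime appears in the filtration. For $\mathfrak{p}$ a minimal prime of $I$ of minimal height, localization at $\mathfrak{p}$ is exact and turns the filtration into a composition series of $(S/I)_{\mathfrak{p}}$, so the number of indices $j$ with $\mathfrak{p}_j = \mathfrak{p}$ equals $\ell_{S_{\mathfrak{p}}}((S/I)_{\mathfrak{p}}) = \ell_{S_{\mathfrak{p}}}((S/\mathfrak{q})_{\mathfrak{p}})$, where $\mathfrak{q}$ is the $\mathfrak{p}$-primary component of $I$. Running the same prime-filtration argument on $S/\mathfrak{q}$ yields the associativity identity $\deg(S/\mathfrak{q}) = \ell_{S_{\mathfrak{p}}}((S/\mathfrak{q})_{\mathfrak{p}}) \cdot \deg(S/\mathfrak{p})$. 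Substituting this back collapses the contribution of all composition factors with $\mathfrak{p}_j=\mathfrak{p}$ to a single term $\deg(S/\mathfrak{q})$, and summing over the top-dimensional minimal primes gives the stated formula.

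The main obstacle is the careful bookkeeping between filtration multiplicities, lengths of localizations, and degrees of primary components; once one has additivity of Hilbert polynomials along prime filtrations together with the associativity formula for multiplicities, the identity assembles itself. This is precisely why the result is usually quoted, as here, from Lemma 5.3.11 of Greuel and Pfister rather than reproved.
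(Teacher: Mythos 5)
The paper does not actually prove this proposition: it is quoted verbatim from Greuel--Pfister, Lem.~5.3.11, so there is no in-paper argument to compare against. Your proof is the standard textbook route --- a prime filtration of $S/I$ with factors $(S/\mathfrak{p}_j)(-a_j)$, additivity of Hilbert polynomials (with shifts only perturbing lower-order terms), the identification of the number of occurrences of a top-dimensional minimal prime $\mathfrak{p}$ with $\ell_{S_\mathfrak{p}}((S/I)_\mathfrak{p})=\ell_{S_\mathfrak{p}}((S/\mathfrak{q})_\mathfrak{p})$ for the $\mathfrak{p}$-primary component $\mathfrak{q}$, and the associativity formula $\deg(S/\mathfrak{q})=\ell_{S_\mathfrak{p}}((S/\mathfrak{q})_\mathfrak{p})\deg(S/\mathfrak{p})$ --- and it is correct. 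Two small points are worth making explicit in the bookkeeping: first, every $\mathfrak{p}_j$ in the filtration lies in $\Supp(S/I)=V(I)$, so any $\mathfrak{p}_j$ contained in a minimal prime $\mathfrak{p}$ of $I$ must equal $\mathfrak{p}$ and $(S/\mathfrak{p}_j)_\mathfrak{p}$ is then the residue field; this is exactly what makes the localized filtration a composition series counted by the local length. Second, since $S$ is a polynomial ring, $\height(\mathfrak{p})+\dim(S/\mathfrak{p})=m$ for every homogeneous prime, so ``minimal height'' in the statement does coincide with ``maximal dimension'' in your Hilbert-polynomial argument. The degenerate case $\height(I)=m$, where the Hilbert polynomial vanishes and $\deg$ must be read as a length, is trivial because the decomposition then consists of a single $\mathfrak{m}$-primary component.
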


The vanishing ideal of a finite set of points satisfies many properties. We list some of them below.

\begin{lem}[\hspace{1sp}{\cite[Cor. 6.3.19]{kreuzer2}}]
Let $[\alpha]\in\PP^{m-1}$, with $\alpha=(\alpha_1,\dots,\alpha_m)$, and let $I_{[\alpha]}=I(\{[\alpha]\})$ its vanishing ideal. Then
$$
I_{[\alpha]}=\left(\{\alpha_ix_j-\alpha_jx_i\mid 0\leq i<j\leq m \} \right).
$$
\end{lem}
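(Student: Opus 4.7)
Write $J=\langle \alpha_ix_j-\alpha_jx_i : 1\le i<j\le m\rangle$ for the candidate ideal. The plan is to prove $J\subseteq I_{[\alpha]}$ and $I_{[\alpha]}\subseteq J$ separately, using a parametrization of the line through~$[\alpha]$ for the second inclusion.

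The inclusion $J\subseteq I_{[\alpha]}$ is immediate: every generator $\alpha_ix_j-\alpha_jx_i$ is homogeneous of degree one, and evaluating at the representative $\alpha=(\alpha_1,\dots,\alpha_m)$ gives $\alpha_i\alpha_j-\alpha_j\alpha_i=0$, so it belongs to $I_{[\alpha]}$.

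For the reverse inclusion, since $[\alpha]\in\PP^{m-1}$ at least one coordinate is non-zero; after reindexing assume $\alpha_m\neq 0$. Consider the $K$-algebra homomorphism $\phi:S\to K[t]$ defined by $\phi(x_i)=\alpha_i t$. A direct check shows $\phi(\alpha_ix_j-\alpha_jx_i)=0$, so $J\subseteq\ker\phi$. To identify $\ker\phi$ with $I_{[\alpha]}$, decompose $f\in S$ as $f=\sum_d f_d$ with $f_d$ homogeneous of degree $d$; then $\phi(f)=\sum_d t^d f_d(\alpha)$ vanishes in $K[t]$ if and only if $f_d(\alpha)=0$ for every $d$, which is exactly the condition for $f\in I_{[\alpha]}$.

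To finish, it suffices to show $\ker\phi\subseteq J$. The $m-1$ relations $\alpha_ix_m-\alpha_mx_i\in J$ (for $i<m$) give $x_i\equiv (\alpha_i/\alpha_m)x_m\pmod{J}$, so $S/J$ is generated as a $K$-algebra by the class of $x_m$; hence the natural map $K[x_m]\to S/J$ is surjective. Composing with the induced $\overline{\phi}:S/J\to K[t]$ yields a $K$-algebra map $K[x_m]\to K[t]$ sending $x_m\mapsto\alpha_mt$, which is injective because $\alpha_m\neq 0$. Therefore $\overline\phi$ is injective, giving $\ker\phi=J$ and, combined with the previous paragraph, $J=I_{[\alpha]}$.

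The only delicate point is that $I_{[\alpha]}$ need not be generated by homogeneous elements a priori, so the identification $\ker\phi=I_{[\alpha]}$ must be made after taking homogeneous components; this is the reason the parametrization $\phi$ is preferable to evaluating at the single representative $\alpha$. Everything else is routine linear algebra and a single use of the fact that the subalgebra of $S$ generated by $x_m$ is a polynomial ring.
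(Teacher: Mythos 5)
The paper offers no proof of this lemma --- it is imported verbatim from \cite[Cor.~6.3.19]{kreuzer2} --- so there is no in-paper argument to measure you against; judged on its own, your proof is correct. The two inclusions are handled cleanly: the easy one by evaluating the degree-one generators at a representative, and the hard one by parametrizing the point via $\phi\colon S\to K[t]$, $x_i\mapsto \alpha_i t$, identifying $I_{[\alpha]}$ with $\ker\phi$ through homogeneous components, and then observing that $S/J$ is generated as a $K$-algebra by the class of $x_m$, so that the induced map $\overline{\phi}\colon S/J\to K[t]$ is injective. Along the way you have in fact proved the two statements the paper places immediately after the lemma, namely the Remark that $I_{[\alpha]}=(\{x_i-\tfrac{\alpha_i}{\alpha_k}x_k\})$ and Corollary \ref{props} (that $S/I_{[\alpha]}\cong K[x_k]$, hence $I_{[\alpha]}$ is prime of height $m-1$ and degree $1$), which the paper treats as obvious consequences of the cited result. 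One phrasing quibble: $I_{[\alpha]}$ \emph{is} generated by homogeneous elements by definition, since it is the homogeneous vanishing ideal; the genuine subtlety you are guarding against is that a non-homogeneous $f$ lies in $I_{[\alpha]}$ exactly when every homogeneous component of $f$ vanishes at $\alpha$ (e.g.\ $x_m-\alpha_m$ vanishes at the chosen representative but does not lie in $I_{[\alpha]}$), and your map into the polynomial ring $K[t]$, rather than evaluation at a point, encodes precisely that. Finally, the index range $1\le i<j\le m$ you use is the intended one; the $0\le i<j\le m$ in the statement is a typo, the variables being $x_1,\dots,x_m$.
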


\begin{rem}
In the previous lemma, at least one $\alpha_k\neq 0$ for some $k$. Hence, we can express $I_{[\alpha]}$ in the following way:
$$
I_{[\alpha]}=(\{x_i-\frac{\alpha_i}{\alpha_k}x_k\mid i=1,\dots,n,i\neq k \}).
$$
\end{rem}

\begin{cor}\label{props}
The ideal $I_{[\alpha]}$ is prime, $\deg(S/I_{[\alpha]})=1$ and $\height(I_{[\alpha]})=m-1$.
\end{cor}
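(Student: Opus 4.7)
The plan is to reduce everything to the case of a polynomial ring in a single variable by exploiting the explicit description of $I_{[\alpha]}$ given in the preceding remark.

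First, I would observe that the $m-1$ generators $x_i - (\alpha_i/\alpha_k)x_k$ (for $i \neq k$) are linear forms in $S$ that are linearly independent over $K$, since each of them involves the variable $x_i$ (with coefficient $1$) while the remaining generators do not. Consequently, one can complete them to a $K$-basis of $S_1$ by adding $x_k$, and perform a homogeneous linear change of variables $y_i := x_i-(\alpha_i/\alpha_k)x_k$ for $i\neq k$ and $y_k := x_k$. Under this change of variables, $S$ is identified with $K[y_1,\dots,y_m]$ and the ideal $I_{[\alpha]}$ becomes simply the ideal $(y_i : i\neq k)$ generated by $m-1$ of the variables.

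From this identification the three conclusions follow immediately and independently. Since $S/I_{[\alpha]} \cong K[y_k]$ and $K[y_k]$ is an integral domain, $I_{[\alpha]}$ is prime. For the height, the isomorphism gives $\dim(S/I_{[\alpha]}) = \dim K[y_k] = 1$, and because $S$ is a polynomial ring (in particular Cohen--Macaulay) we have $\height(I_{[\alpha]}) = \dim S - \dim(S/I_{[\alpha]}) = m-1$. Finally, the Hilbert function of $K[y_k]$ satisfies $H(d)=1$ for every $d\geq 0$, so the Hilbert polynomial of $S/I_{[\alpha]}$ is the constant polynomial $1$; since $\dim(S/I_{[\alpha]})=1$, the degree of $S/I_{[\alpha]}$ is $0!\cdot 1 = 1$.

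I do not anticipate any real obstacle: the only point requiring a brief justification is the linear independence of the generators in the remark, which is immediate since they have pairwise distinct leading variables. Everything else is a direct translation of standard facts about polynomial rings in one variable through the linear change of coordinates.
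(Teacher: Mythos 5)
Your proof is correct and follows essentially the same route as the paper, which also deduces all three properties from the isomorphism $S/I_{[\alpha]}\cong K[x_k]$ obtained from the explicit generators in the preceding remark. You simply spell out the linear change of coordinates and the resulting computations of dimension, height and Hilbert function in more detail than the paper's one-line argument.
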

\begin{proof}
All properties follow from the fact that $S/I_{[\alpha]}\cong K[x_k]$ for some $k$, which is obvious from the previous remark.
\end{proof}

\begin{rem}\label{remarkprimary}
If we have a finite subset $\mathbb{X}\subset \PP^{m-1}$, then
$$
I(\X)=\bigcap_{[\beta]\in\X}I_{[\beta]}.
$$

Taking into account that each $I_{[\beta]}$ is prime, the previous expression is an irredundant primary decomposition of $I(\X)$.
\end{rem}

\begin{cor}\label{corgrado}
Let $\mathbb{X}\subset \PP^{m-1}$ be a finite subset. Then $\deg(S/I(\X))=\abs{\X}$, $\height(I(\X))=m-1$, and $S/I(\X)$ is Cohen-Macaulay.
\end{cor}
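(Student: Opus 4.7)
The plan is to deduce all three assertions from the irredundant primary decomposition $I(\X)=\bigcap_{[\beta]\in\X}I_{[\beta]}$ obtained in Remark \ref{remarkprimary}, combined with Corollary \ref{props} and the additivity of degree (Proposition \ref{aditividadgrado}).

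First, I would compute the height. Since the associated primes of $I(\X)$ are exactly the ideals $I_{[\beta]}$ for $[\beta]\in\X$, and each of them has height $m-1$ by Corollary \ref{props}, the height of the intersection equals the minimum of these heights, so $\height(I(\X))=m-1$. In particular all associated primes have the same height, i.e., there are no embedded primes.

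Next, for the degree, I would apply Proposition \ref{aditividadgrado} to the irredundant primary decomposition. Since every primary component has height equal to $\height(I(\X))=m-1$, the sum runs over all $[\beta]\in\X$, and since $\deg(S/I_{[\beta]})=1$ by Corollary \ref{props}, we obtain
\[
\deg(S/I(\X))=\sum_{[\beta]\in\X}\deg(S/I_{[\beta]})=|\X|.
\]

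Finally, for the Cohen--Macaulay property, the Krull dimension of $S/I(\X)$ is $m-(m-1)=1$, so I need to check $\depth(S/I(\X))\geq 1$. The key observation is that since the primary decomposition has no embedded components, the homogeneous maximal ideal $\mathfrak{m}$ is not an associated prime of $S/I(\X)$. By prime avoidance applied to the (finitely many) associated primes $I_{[\beta]}$, which are all proper subsets of $\mathfrak{m}$ because they have height $m-1<m$, there exists a homogeneous element of $\mathfrak{m}$ that is a non-zero-divisor on $S/I(\X)$. This gives $\depth(S/I(\X))\geq 1$, and together with the dimension bound we conclude $\depth(S/I(\X))=1=\dim(S/I(\X))$, so $S/I(\X)$ is Cohen--Macaulay. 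The only mildly delicate point is this last step, but it is standard once we know that the primary decomposition has no embedded components.
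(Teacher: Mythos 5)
Your proposal is correct and follows essentially the same route as the paper: all three claims are deduced from the irredundant primary decomposition $I(\X)=\bigcap_{[\beta]\in\X}I_{[\beta]}$ together with Corollary \ref{props} and Proposition \ref{aditividadgrado}, and the Cohen--Macaulay property comes from the absence of an $\mathfrak{m}$-primary (embedded) component plus $\dim S/I(\X)=1$. Your prime-avoidance step merely spells out the standard fact, used implicitly in the paper, that $\depth S/I(\X)=0$ if and only if $\mathfrak{m}$ is an associated prime.
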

\begin{proof}
The first property follows from Proposition \ref{aditividadgrado} and the previous remark. The second one follows from Corollary \ref{props} and the previous remark. The last one is deduced from the fact that $\depth(I(\X))=0$ if and only if $I(\X)$ has an $\mathfrak{m}$-primary component, which is not the case because of the previous remark, and the fact that $\dim S/I(\X)=1$.
\end{proof}

The following lemma is interesting  because it relates the number of common zeros of a set of polynomials to the degree of a certain ideal, which gives a relation between Coding Theory and Commutative Algebra.

\begin{lem}[\hspace{1sp}{\cite[Lem. 3.4]{gonzalez}}]\label{lemacontarceros2}
Let $\mathbb{X}$ be a finite subset of $\mathbb{P}^{m-1}$ over a field $K$, and let $I(\mathbb{X})\subset S$ be its vanishing ideal. If $F=\{f_1,\dots,f_r\}$ is a set of homogeneous polynomials of $S\setminus \{0\}$, then the number of points of $V_\mathbb{X}(F)$ (common zeroes of $F$ which are in $\mathbb{X}$) is given by
$$
\abs{V_\mathbb{X}(F)}=\begin{cases}
\deg(S/(I(\mathbb{X}),F)) &\text{ if } (I(\mathbb{X}):(F))\neq I(\mathbb{X}), \\
0 &\text{ if } (I(\mathbb{X}):(F))= I(\mathbb{X}).
\end{cases}
$$
\end{lem}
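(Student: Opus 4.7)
The plan is to exploit the irredundant primary decomposition $I(\X) = \bigcap_{[\beta] \in \X} I_{[\beta]}$ from Remark~\ref{remarkprimary}, combined with the additivity of degree (Proposition~\ref{aditividadgrado}), after splitting into two cases according to whether the colon ideal grows.

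For the colon computation, since every $I_{[\beta]}$ is prime (Corollary~\ref{props}), it is immediate that $I_{[\beta]} : (F) = S$ when $[\beta] \in V_\X(F)$ and $I_{[\beta]} : (F) = I_{[\beta]}$ otherwise. Using that colon commutes with finite intersection, I would obtain
$$
I(\X) : (F) \;=\; \bigcap_{[\beta] \in \X \setminus V_\X(F)} I_{[\beta]} \;=\; I(\X \setminus V_\X(F)).
$$
Because removing any point from a finite subset strictly enlarges its vanishing ideal, this last intersection equals $I(\X)$ if and only if $V_\X(F) = \emptyset$, which yields the branch $\abs{V_\X(F)} = 0$.

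For the main case, assume $V_\X(F) \neq \emptyset$ and set $J = I(\X) + (F)$. I would first identify the minimal primes of $J$: any such prime $\mathfrak{p}$ contains $I(\X) = \bigcap I_{[\beta]}$, hence contains some $I_{[\beta]}$, and also contains $(F)$. Since $S/I_{[\beta]} \cong K[x_k]$ (Corollary~\ref{props}), the only homogeneous prime strictly containing $I_{[\beta]}$ is the irrelevant maximal ideal $\mathfrak{m}$, so either $\mathfrak{p} = I_{[\beta]}$ with $[\beta] \in V_\X(F)$ or $\mathfrak{p} = \mathfrak{m}$. The hypothesis $V_\X(F) \neq \emptyset$ provides some $[\beta_0]$ with $I_{[\beta_0]} \supset J$, so $\height(J) = m-1$ and $\mathfrak{m}$ cannot be minimal. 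Thus the minimal primes of $J$ are exactly $\{I_{[\beta]} : [\beta] \in V_\X(F)\}$.

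Finally, to compute the isolated primary components I would localize $J$ at $I_{[\beta]}$ for each $[\beta] \in V_\X(F)$: every other $I_{[\gamma]}$ becomes the unit ideal and $(F) S_{I_{[\beta]}} \subset I_{[\beta]} S_{I_{[\beta]}}$, so $J S_{I_{[\beta]}} = I_{[\beta]} S_{I_{[\beta]}}$, and the $I_{[\beta]}$-primary component of $J$ is simply $I_{[\beta]}$ itself. Additivity of degree (Proposition~\ref{aditividadgrado}), together with $\deg(S/I_{[\beta]}) = 1$, then yields
$$
\deg(S/J) \;=\; \sum_{[\beta] \in V_\X(F)} \deg(S/I_{[\beta]}) \;=\; \abs{V_\X(F)}.
$$
The main subtlety I anticipate is ensuring that possible embedded primary components of $J$ at $\mathfrak{m}$ do not interfere; this is automatic in Proposition~\ref{aditividadgrado} since $\height(\mathfrak{m}) = m > m-1 = \height(J)$.
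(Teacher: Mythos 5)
The paper does not prove this lemma at all: it is imported verbatim from \cite[Lem.~3.4]{gonzalez}, so there is no in-paper proof to compare against. Your argument is correct and self-contained, and it follows the same standard route as the cited source: the colon computation via $(I(\X):(F))=\bigcap_{[\beta]}(I_{[\beta]}:(F))$ correctly characterizes the case $V_\X(F)=\emptyset$ (using irredundancy of the decomposition in Remark~\ref{remarkprimary} and the fact that a homogeneous $f$ lies in $I_{[\beta]}$ iff $f(\beta)=0$), the identification of the minimal primes and isolated primary components of $I(\X)+(F)$ by localization is sound, and the final count via Proposition~\ref{aditividadgrado} correctly discards any embedded $\mathfrak{m}$-primary component on height grounds.
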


\begin{lem}[\hspace{1sp}{\cite[Lem. 8]{engheta}}]\label{unmixgrado}
Let $I\subset J\subset S$ be unmixed homogeneous ideals with the same height. If $\deg(S/I)=\deg(S/J)$, then $I=J$.
\end{lem}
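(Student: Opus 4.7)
The plan is to leverage the short exact sequence of graded $S$-modules
$$
0 \longrightarrow J/I \longrightarrow S/I \longrightarrow S/J \longrightarrow 0
$$
and to argue that the unmixed hypothesis forces $J/I$ to share the Krull dimension of $S/I$ whenever $J/I$ is nonzero; additivity of degree across this sequence will then contradict $\deg(S/I)=\deg(S/J)$ unless $J/I=0$.

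The first step is the inclusion $\Ass(J/I)\subseteq \Ass(S/I)$, which holds because $J/I$ is a submodule of $S/I$. By the unmixed hypothesis, every $P\in\Ass(S/I)$ satisfies $\height(P)=h$, and hence $\dim(S/P)=\dim S-h$. Thus, if $J/I\neq 0$, then
$$
\dim(J/I)=\max\{\dim S/P\mid P\in\Ass(J/I)\}=\dim S-h,
$$
which also equals $\dim(S/I)$, and by the unmixed hypothesis on $J$ of the same height $h$, equals $\dim(S/J)$.

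With all three modules sharing the same Krull dimension, additivity of degree across the short exact sequence yields
$$
\deg(S/I)=\deg(J/I)+\deg(S/J).
$$
Since a nonzero finitely generated graded module $M$ of Krull dimension $d$ has a Hilbert polynomial of degree $d-1$ with positive leading coefficient, $\deg(J/I)\geq 1$, contradicting $\deg(S/I)=\deg(S/J)$. Hence $J/I=0$, i.e., $I=J$.

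The main obstacle is the additivity of degree for short exact sequences of modules of equal dimension; this is a mild strengthening of Proposition \ref{aditividadgrado} as stated, but follows from the standard Hilbert polynomial machinery. If one prefers to stay strictly within the framework of Proposition \ref{aditividadgrado}, the same conclusion can be reached by working with the irredundant primary decompositions of $I$ and $J$: the containment $I\subseteq J$ combined with the unmixed hypothesis and equality of heights forces $\Ass(J)\subseteq \Ass(I)$, and comparing $P$-primary components at each shared associated prime via localization and the multiplicity formula $\deg(S/\mathfrak{q})=\deg(S/\sqrt{\mathfrak{q}})\cdot \mathrm{length}\bigl(S_{\sqrt{\mathfrak{q}}}/\mathfrak{q}S_{\sqrt{\mathfrak{q}}}\bigr)$ produces the same contradiction unless $I=J$.
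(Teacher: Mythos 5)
The paper does not actually prove this lemma: it is imported verbatim from Engheta \cite[Lem.~8]{engheta}, so there is no in-text argument to compare against. Your proof is correct and self-contained. The only point requiring care is the one you flag yourself: Proposition~\ref{aditividadgrado} as stated concerns primary decompositions of ideals, so you genuinely need the module-theoretic additivity $\deg(B)=\deg(A)+\deg(C)$ for a short exact sequence $0\to A\to B\to C\to 0$ of graded modules of equal dimension, which follows from additivity of Hilbert polynomials and comparison of leading terms (in the corner case $\height(I)=\dim S$ one replaces Hilbert polynomials by lengths, and additivity of length gives the same conclusion). Your reduction to that situation is the real content of the argument: $\Ass(J/I)\subseteq\Ass(S/I)$ because $J/I$ is a submodule, unmixedness pins every associated prime of $S/I$ at height $h$, so a nonzero $J/I$ has full dimension $\dim S-h$ and hence $\deg(J/I)\geq 1$, contradicting $\deg(S/I)=\deg(S/J)$. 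The fallback you sketch in the final paragraph --- noting that the minimal primes of $J$ must be among those of $I$ and comparing lengths of localizations via the associativity formula --- is essentially Engheta's original argument; it stays closer to the ideal-theoretic language used elsewhere in the paper (and to Proposition~\ref{aditividadgrado}), at the cost of having to argue separately that no minimal prime of $I$ disappears from the support of $S/J$ and that an unmixed ideal is recovered from its localizations at its minimal primes. Either route is complete.
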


The computation of the vanishing ideal only makes sense when $\X=V_{\PP^{m-1}}(I)\neq \emptyset$. One can get $\X=\emptyset$ in several ways, for example, if $I$ is 0-dimensional, or if it has positive dimension but no common zero of the homogeneous polynomials in $I$ is in $\PP^{m-1}$ for the corresponding field $\fq$. The following lemma gives an algebraic characterization of this property.

\begin{lem}
Let $I\subset S$ be an homogeneous ideal. Then $\X=V_{\PP^{m-1}}(I)= \emptyset$ if and only if $(I(\mathbb{P}^{m-1}):I)= I(\mathbb{P}^{m-1})$.
\end{lem}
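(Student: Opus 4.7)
The plan is to unravel the colon ideal $(I(\PP^{m-1}):I)$ using the primary decomposition $I(\PP^{m-1})=\bigcap_{[P]\in\PP^{m-1}} I_{[P]}$ given in Remark \ref{remarkprimary}. Applying the standard identity $(J\cap K:L)=(J:L)\cap(K:L)$ for colon ideals (which extends to arbitrary finite intersections), I would rewrite
\[
(I(\PP^{m-1}):I)=\bigcap_{[P]\in\PP^{m-1}}(I_{[P]}:I).
\]
Since each $I_{[P]}$ is prime by Corollary \ref{props}, the factor $(I_{[P]}:I)$ collapses either to $I_{[P]}$ (when $I\not\subseteq I_{[P]}$) or to $S$ (when $I\subseteq I_{[P]}$). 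The second case is exactly the condition $[P]\in\X$.

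From here, both implications drop out. If $\X=\emptyset$, the intersection runs over all $[P]\in\PP^{m-1}$ and recovers $I(\PP^{m-1})$. If $\X\neq\emptyset$, the same formula reduces to $(I(\PP^{m-1}):I)=\bigcap_{[P]\notin\X}I_{[P]}$, so what remains is to verify that removing points from the intersection produces a strictly larger ideal.

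The only step requiring any real care is this strict containment. I would fix $[P_0]\in\X$ and argue by contradiction: if $I_{[P_0]}$ contained $\bigcap_{[P]\notin\X}I_{[P]}$, then since $\prod_{[P]\notin\X}I_{[P]}\subseteq\bigcap_{[P]\notin\X}I_{[P]}$, primality of $I_{[P_0]}$ would force $I_{[P_0]}\supseteq I_{[P]}$ for some $[P]\neq[P_0]$, contradicting Corollary \ref{props} (these are distinct primes of the same height $m-1$). Picking an element in the intersection that escapes $I_{[P_0]}$ then produces an element of $(I(\PP^{m-1}):I)$ lying outside $I(\PP^{m-1})$. This standard prime avoidance step is the only delicate point; everything else is formal manipulation of colon ideals and primary decompositions.
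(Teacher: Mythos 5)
Your proof is correct and follows essentially the same route as the paper's: both decompose $(I(\PP^{m-1}):I)=\bigcap_{[P]\in\PP^{m-1}}(I_{[P]}:I)$ and use primality of each $I_{[P]}$ to reduce to the question of whether $I\subseteq I_{[P]}$ for some $[P]$. The only difference is that you explicitly justify the strictness of the containment when some factor collapses to $S$ (via the product-of-primes argument), whereas the paper leaves this implicit in the irredundancy of the decomposition from Remark \ref{remarkprimary}; that extra care is welcome but does not change the argument.
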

\begin{proof}
We have $\X=V_{\PP^{m-1}}(I)= \emptyset$ if and only if $I\not \subset I_{[P]}$ for any $[P]\in \PP^{m-1}$. We also have that $(I_{[P]}:I)=I_{[P]}$ if and only if $I\not\subset I_{[P]}$ because $I_{[P]}$ is prime. Therefore, $I\not \subset I_{[P]}$ for any $[P]\in \PP^{m-1}$ if and only if $(I(\mathbb{P}^{m-1}):I)= I(\mathbb{P}^{m-1})$ because
$$
(I(\mathbb{P}^{m-1}):I)=\bigcap_{[P]\in \PP^{m-1}} (I_{[P]}:I),
$$
so the result is proved.
\end{proof}

The natural way of computing $I(\X)$ from the point of view of Coding Theory is by taking the radical of $I_q=I+I(\PP^{m-1})$, similarly to what is done in the affine case (although in that case, $I_q$ is always radical). For this, we have to prove that
$$
I(\X)=\sqrt{I+I(\mathbb{P}^{m-1})}.
$$

This can be seen as an application of Hilbert's Nullstellensatz in the algebraic closure of $\fq$, or can be proved directly as in {\cite[Thm. 3.13]{jaramillo}}. Inspired by the proof of the latter, the following theorem shows a way to compute $I(\X)$ using the saturation with respect to the homogeneous maximal ideal. This is also a natural way to compute $I(\X)$ from the point of view of Commutative Algebra, because we are getting rid of the 0-dimensional components, which are meaningless in this projective setting. 
Note that saturation with respect to a specific element has been used for similar purposes in \cite[Cor. 4.4]{renteria} for certain projective binomial varieties.

\begin{thm}\label{sat}
Let $I$ be an homogeneous ideal such that $(I(\mathbb{P}^{m-1}):I)\neq I(\mathbb{P}^{m-1})$. Let $\mathbb{X}=V_{\mathbb{P}^{m-1}}(I)$ and $\mathfrak{m}=(x_1,\dots,x_m)$ the homogeneous maximal ideal. Then
$$
I(\X)=(I+I(\mathbb{P}^{m-1})):\mathfrak{m}^\infty.
$$
\end{thm}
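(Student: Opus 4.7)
The plan is to combine the additivity of degree (Proposition \ref{aditividadgrado}), the point-counting lemma (Lemma \ref{lemacontarceros2}), and the unmixedness criterion (Lemma \ref{unmixgrado}) to identify the saturation with $I(\X)$. Throughout, write $J=I+I(\PP^{m-1})$.

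First I would establish the easy inclusion $J:\mathfrak{m}^\infty \subseteq I(\X)$. Since every element of $I$ and of $I(\PP^{m-1})$ vanishes on $\X\subseteq \PP^{m-1}$, we have $J\subseteq I(\X)$. Remark \ref{remarkprimary} exhibits the irredundant primary decomposition of $I(\X)$ as $\bigcap_{[\beta]\in \X} I_{[\beta]}$; by Corollary \ref{props} each $I_{[\beta]}$ has height $m-1<m$, so none of them is $\mathfrak{m}$-primary. Consequently $I(\X):\mathfrak{m}^\infty = I(\X)$, and saturating $J\subseteq I(\X)$ gives the claimed inclusion.

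Next I would analyze the primary structure of $J$. As recalled right before the theorem, $\sqrt{J}=I(\X)=\bigcap_{[\beta]\in \X} I_{[\beta]}$, so the minimal primes of $J$ are precisely the $I_{[\beta]}$, all of height $m-1$. Any embedded prime of $J$ must strictly contain some $I_{[\beta]}$ and therefore have height $\geq m$, which in $S$ forces it to be $\mathfrak{m}$. Thus the primary decomposition of $J$ takes the form $J=\bigcap_{[\beta]\in \X} \mathfrak{q}_{[\beta]} \cap \mathfrak{q}'$ with each $\mathfrak{q}_{[\beta]}$ being $I_{[\beta]}$-primary and $\mathfrak{q}'$ either absent or $\mathfrak{m}$-primary. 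Saturating removes exactly $\mathfrak{q}'$, so $J:\mathfrak{m}^\infty=\bigcap_{[\beta]\in \X} \mathfrak{q}_{[\beta]}$ is unmixed of height $m-1$, matching the height of the unmixed ideal $I(\X)$ from Corollary \ref{corgrado}.

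For the degree comparison, Proposition \ref{aditividadgrado} applied to the two decompositions yields
$$\deg(S/(J:\mathfrak{m}^\infty)) = \sum_{[\beta]\in \X}\deg(S/\mathfrak{q}_{[\beta]}) = \deg(S/J),$$
since only the height-$(m-1)$ components contribute to either sum. The hypothesis $(I(\PP^{m-1}):I)\neq I(\PP^{m-1})$ is exactly what allows Lemma \ref{lemacontarceros2}, applied with the finite set $\PP^{m-1}$ and a generating set $F$ of $I$, to give $|\X|=|V_{\PP^{m-1}}(F)|=\deg(S/J)$. Combined with $\deg(S/I(\X))=|\X|$ from Corollary \ref{corgrado}, we obtain $\deg(S/(J:\mathfrak{m}^\infty))=\deg(S/I(\X))$, and Lemma \ref{unmixgrado} closes the argument.

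The main obstacle I anticipate is the second paragraph: one must be careful to justify that the only embedded prime that can appear in $J$ is $\mathfrak{m}$, since this is what guarantees that saturation produces a truly unmixed ideal to which Lemma \ref{unmixgrado} applies. Once that point is secured, the count $\deg(S/J)=|\X|$ via Lemma \ref{lemacontarceros2} and additivity of degree finishes everything mechanically.
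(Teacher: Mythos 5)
Your proposal is correct and follows essentially the same route as the paper: both arguments rest on $\sqrt{I+I(\PP^{m-1})}=I(\X)$, the degree count $\deg(S/(I+I(\PP^{m-1})))=\abs{\X}$ via Lemma \ref{lemacontarceros2}, additivity of the degree (Proposition \ref{aditividadgrado}), and Lemma \ref{unmixgrado}. The only cosmetic difference is that you apply Lemma \ref{unmixgrado} once to the saturated ideal $(I+I(\PP^{m-1})):\mathfrak{m}^\infty\subseteq I(\X)$, whereas the paper applies it componentwise to identify each height-$(m-1)$ primary component with the corresponding point ideal $I_{[P_i]}$ before saturating.
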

\begin{proof}
Again we denote $I_q=I+I(\mathbb{P}^{m-1})$ and we are going to prove first that $\deg(S/I_q)=\deg(S/I(\mathbb{X}))$. We can apply Lemma \ref{lemacontarceros2} to $\mathbb{X}=\mathbb{P}^{m-1}$ and a set of generators $F$ of $I$. We obtain:
$$
\abs{\mathbb{X}}=\abs{V_{\mathbb{P}^{m-1}}(I)}=\deg(S/I_q),
$$
and because $\abs{\mathbb{X}}=\deg(S/I(\mathbb{X}))$ holds by Corollary \ref{corgrado}, we get the equality $\deg(S/I_q)=\deg(S/I(\mathbb{X}))$. 

On the other hand, we have that $\sqrt{I_q}=I(\mathbb{X})$. Thus, $\dim(S/I_q)=1$ and the primary decomposition is
$$
I_q=\mathfrak{q}_1\cap \cdots\cap \mathfrak{q}_l\cap Q,
$$
where $\dim(S/\mathfrak{q}_i)=1$, $1\leq i \leq l$, and $Q$ is the whole ring $S$ if $I_q$ is equidimensional, and an $\mathfrak{m}$-primary ideal otherwise. If we consider the irredundant primary decomposition $I(\mathbb{X})=\bigcap_{[P_i]\in \X} I_{[P_i]}$, with $\abs{\X}=n$, then we get the equality
$$
I_{[P_1]}\cap \cdots \cap I_{[P_n]}=\sqrt{\mathfrak{q}_1}\cap \cdots \cap\sqrt{ \mathfrak{q}_l}\cap \sqrt{Q}=\sqrt{\mathfrak{q}_1}\cap \cdots\cap\sqrt{ \mathfrak{q}_l}.
$$

Reordering if necessary, we have that $I_{[P_i]}=\sqrt{\mathfrak{q}_i}$, $1\leq i \leq n$ and $l\geq n$. Taking into account that $\deg(S/I_q)=\deg(S/I(\mathbb{X}))$, the additivity of the degree \ref{aditividadgrado} and $\deg(S/I_{[P_i]})=1$ for all $i$, we get
$$
\abs{\mathbb{X}}=n=\sum_{i=1}^n \deg(S/I_{[P_i]})=\sum_{i=1}^l \deg(S/\mathfrak{q}_i)\geq l.
$$

As observed before, $l\geq n$, which, together with the previous inequality, gives $l=n$. Moreover, we deduce $\deg(S/\mathfrak{q}_i)=1$ for all $i$, $1\leq i\leq n$. Therefore, using that $I_{[P_i]}=\sqrt{\mathfrak{q}_i}\supset \mathfrak{q}_i$, $1\leq i\leq n$ and Lemma \ref{unmixgrado} we have that $\mathfrak{q}_i=I_{[P_i]}$, $1\leq i\leq n$. Finally, we observe that
$$
(I_q:\mathfrak{m}^\infty)=(I_{[P_1]}:\mathfrak{m}^\infty)\cap \cdots\cap  (I_{[P_n]}:\mathfrak{m}^\infty)\cap (Q:\mathfrak{m}^\infty)=I_{[P_1]}\cap\cdots\cap I_{[P_n]}=I(\mathbb{X}),
$$
and the result holds.
\end{proof}

Theorem \ref{sat} gives a more efficient way of computing the vanishing ideal $I(\X)$ than the usual way using the radical. For the computations we needed to choose between the different computer algebra systems, the main ones for Commutative Algebra are CoCoA \cite{cocoa}, Singular \cite{singular} and Macaulay2 \cite{M2}. We chose Macaulay2 for the examples on this occasion. We have used a computer with 512GB of RAM and an AMD EPYC 7F52 processor.

\begin{eje}
We consider the 3-dimensional rational normal scroll defined by the equations given by the $2\times 2$ minors of the following matrix:
$$
M=\begin{pmatrix}
x_0 & x_1 & x_2 & x_3 & x_4 & y_0 & y_1 & y_2 & y_3 & y_4 & z_0 & z_1 & z_2 & z_3 & z_4\\
x_1 & x_2 & x_3 & x_4 & x_5 & y_1 & y_2 & y_3 & y_4 & y_5 & z_1 & z_2 & z_3 & z_4 & z_5 \\
\end{pmatrix},
$$
and let $I$ be the homogeneous ideal defined by these equations. The number of rational points of this variety on $\mathbb{F}_q$ is $(q^2+q+1)(q+1)$ \cite[Cor. 2.3]{carvalho}. We first consider the case with $q=9$. In this situation, $\abs{\X}=910$, and the computation of the saturation with Macaulay2 \cite{M2} takes 3.65 seconds. However, the computation of the radical of $I_q$ takes 1108.15 seconds, which shows the big difference in efficiency between the two methods.

If we consider the case $q=11$ instead, we have $\abs{\X}=1596$. The saturation takes 5.08 seconds, and Macaulay2 \cite{M2} is not able to compute the radical of the ideal.

For this example, we have also considered Magma \cite{magma}, which seems to have a well-optimized algorithm to compute the radical over fields of positive characteristic. Although the efficiency gap is reduced, the saturation is still more efficient than computing the radical.
\end{eje}

\begin{rem}
It is always possible to obtain the vanishing ideal using the saturation with respect to a single polynomial. Because of prime avoidance \cite[Lemma 3.3]{eisenbud} there is a homogeneous polynomial $f\in S$ such that $f\not \in I_{[P_i]}$, for every $[P_i]\in\X$, i.e., $f$ does not vanish at any of the points of $\X$. Then, following the proof of Theorem \ref{sat}, we get
$$
(I_q:f^\infty)=\left( \bigcap_{[P_i]\in\X}(I_{[P_i]}:f^\infty)\right)\cap (Q:f^\infty)=\bigcap_{[P_i]\in\X}I_{[P_i]}=I(\X).
$$
The problem is that finding such a polynomial $f$ may not be easy. However, in some specific examples, such as the following one, it can be done.
\end{rem}

\begin{ex}
Let $I$ be the homogeneous ideal of the rational normal curve defined by the equations given by the $2\times 2$ minors of the matrix
$$
N=\begin{pmatrix}
x_0 & x_1 & x_2 & x_3 & x_4 \\
x_1 & x_2 & x_3 & x_4 & x_5 \\
\end{pmatrix}.
$$
We work over the field $\F_9$, and we consider the polynomial $f=x_0-x_4-x_5$. If we define $I_9=I+I(\PP^5)$, then it is easy to check with Macaulay2 \cite{M2} that $(I_9:f^\infty)=(I_9:\mathfrak{m}^\infty)=I(\X)$, and that $f$ does not vanish at any of the points in $\X$, i.e., $(I(\X):f)=I(\X)$.
\end{ex}

Having seen how to compute the vanishing ideal $I(\X)$, one may wonder if there are many cases in which $I_q$ is saturated. If that were the case, we would not need to compute the saturation and we would get the vanishing ideal directly. An equivalent question would be to ask when the equality $I+I(\PP^{m-1})=I(\X)$ holds. It is easy to see that if one takes $I=I(\mathbb{X})$, the vanishing ideal of a finite set of points $\X\subset \mathbb{P}^{m-1}$, then $I(\X)+I(\PP^{m-1})=I(\X)$. Another trivial example would be to take an ideal $I$ with $V_{\PP^{m-1}}(I)=\PP^{m-1}$. We can also find some nontrivial examples, like the following one.

\begin{eje}\label{ejecartesiano}
Let $K=\F_4$ and $\mathbb{X}=[\mathbb{F}_2\times \mathbb{F}_2\times \mathbb{F}_4]\subset \mathbb{P}^{2}$. We can compute the vanishing ideal $I(\X)$ directly (intersecting the vanishing ideals of the points in $\X$), but we can also use \cite[Prop. 2.11]{nestedcartesian}. In any case, we obtain 
$$
I(\mathbb{X})=(x_1x_2^2+x_1^2x_2,x_1x_3^4+x_1^4x_3,x_2x_3^4+x_2^4x_3).
$$
We consider the ideal $I$, obtained by replacing the primary component $(x_1,x_2)$ of $I(\X)$ by $(x_1,x_2)^2$. Clearly $\mathbb{X}=V_{\mathbb{P}^{2}}(I)$. In this situation, it turns out that $I+I(\PP^2)=I(\X)$. This is easy to check by looking at the generators of these ideals:
$$
\begin{aligned}
I(\mathbb{P}^2)&=(x_1x_2^4+x_1^4x_2, x_1x_3^4+x_1^4x_3, x_2x_3^4+x_2^4x_3), \\
I&=(x_1x_2^2+x_1^2x_2,x_1(x_1x_3^4+x_1^4x_3),x_2(x_1x_3^4+x_1^4x_3),x_2(x_2x_3^4+
x_2^4x_3)),\\
I(\mathbb{X})&=(x_1x_2^2+x_1^2x_2,x_1x_3^4+x_1^4x_3,x_2x_3^4+x_2^4x_3). \\
\end{aligned}
$$

Similar examples can be constructed by considering $\mathbb{X}=\mathbb{F}_{p^l}\times\mathbb{F}_{p^l}\times \mathbb{F}_{p^{l'}}$, with $l\mid l'$, and increasing the multiplicity of the component $(x_1,x_2)$.
\end{eje}

Even though we can construct several nontrivial examples, one can observe that in order to do so we have not strayed away too much from $I(\PP^{m-1})$ and $I(\X)$ (we have just used an ideal $I(\X)$ that shares some generators with $I(\PP^{m-1})$ and modified it a little). In fact, in most cases we have encountered, $I_q$ was not saturated. The next result shows that there are some finite sets of points $\X$ such that there are no nontrivial homogeneous ideals $I$ with $V_{\PP^{m-1}}(I)=\X$ verifying $I+I(\PP^{m-1})=I(\X)$.

\begin{prop}\label{impos}
Let $\X\subset \PM$ be a finite set of points such that the degree of the elements of a minimal generating set of $I(\X)$ is lower than $q+1$. Then $I+I(\PM)=I(\X)$ if and only if $I=I(\X)$.
\end{prop}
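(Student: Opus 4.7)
The forward direction is essentially automatic: since $\X\subset\PM$ we have $I(\PM)\subset I(\X)$, so $I=I(\X)$ immediately gives $I+I(\PM)=I(\X)+I(\PM)=I(\X)$. The content is in the converse, and my plan is to exploit a degree/grading incompatibility between $I(\PM)$ and the generators of $I(\X)$.

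The key observation I would use is that $I(\PM)$ is generated by the binomials $x_i^q x_j - x_i x_j^q$, which are all homogeneous of degree $q+1$. Consequently the graded piece $I(\PM)_d$ vanishes for every $d<q+1$. So in the degrees where $I(\X)$ lives by hypothesis, the contribution of $I(\PM)$ is trivial.

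Assuming $I+I(\PM)=I(\X)$, I would first note that $I\subset I(\X)$ is immediate from $I\subset I+I(\PM)$. For the reverse inclusion, pick a minimal homogeneous generating set $g_1,\dots,g_s$ of $I(\X)$ with $\deg g_i<q+1$. Each $g_i$ lies in $I(\X)=I+I(\PM)$, and by splitting into homogeneous components of degree $\deg g_i$ we can write $g_i=f_i+h_i$ with $f_i\in I_{\deg g_i}$ and $h_i\in I(\PM)_{\deg g_i}$. Since $\deg g_i<q+1$, the second summand is forced to be zero, so $g_i=f_i\in I$. Therefore $I(\X)=(g_1,\dots,g_s)\subset I$, giving $I=I(\X)$.

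I do not expect any real obstacle here: the whole argument collapses to the fact that $I(\PM)$ has no generators (indeed no elements) in low degree, so the hypothesis on the degrees of the minimal generators of $I(\X)$ kills the freedom one would otherwise have in choosing $I$. The only thing to be careful about is working with a genuinely minimal homogeneous generating set of $I(\X)$ and extracting the correct homogeneous component of the decomposition $g_i=f_i+h_i$; this is routine since both $I$ and $I(\PM)$ are homogeneous.
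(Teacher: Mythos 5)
Your proof is correct and follows essentially the same route as the paper's: both arguments rest on the single observation that $I(\PM)$ is generated in degree $q+1$, hence has zero graded pieces in lower degrees, forcing every minimal generator of $I(\X)$ to lie in $I$. Your write-up merely makes explicit the graded decomposition $(I+I(\PM))_d=I_d+I(\PM)_d$ that the paper leaves implicit.
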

\begin{proof}
Let $I$ be an homogeneous ideal verifying $I+I(\PM)=I(\X)$. Obviously, $I\subset I(\X)$, and we have to prove the other inclusion. The degree of the minimal generators of $I(\PM)$ is $q+1$. Therefore, the minimal generators of $I(\X)$, of degree lower than $q+1$, must all be in $I$, which proves the result.
\end{proof}

\begin{eje}
Let $K=\F_4$, and let $\X=[\F_2\times \F_2\times \F_2]\subset \PP^2$. The vanishing ideal $I(\X)$ is the same as $I(\PP^2)$ in $\F_2$. Therefore, we have
$$
I(\X)=(x_1^2x_2-x_1x_2^2,x_1^2x_3-x_1x_3^2,x_2^2x_3-x_2x_3^2).
$$
The generators of $I(\X)$ are of degree $3<5=q+1$. Consequently, we can use Proposition \ref{impos} to assert that there is no homogeneous ideal $I$, besides $I(\X)$, such that $I+I(\PP^2)=I(\X)$.
\end{eje}

In the proof of \ref{sat} we showed that $\deg(S/I_q)=\deg(S/I(\X))$. Also, taking into account that $\dim( S/I(\X))=1$ and that $\sqrt{I_q}=I(\X)$, we get $\height(I_q)=\height(I(\X))$. As we have said, in most cases, $I_q\neq I(\X)$. Consequently, we would have $I_q\subsetneq I(\X)$, but $\deg(S/I_q)=\deg(S/I(\X))$. The following example illustrates this fact, which seems to contradict \cite[Lem. 2.10 (b)]{geramita}.

\begin{ex}
We consider again the set $\X=[\mathbb{F}_2\times \mathbb{F}_2\times \mathbb{F}_4]\subset \mathbb{P}^{2}$ from example \ref{ejecartesiano}. We can replace the primary component $(x_1,x_3)$ by $(x_1,x_3)^2$ in the primary decomposition of $I(\X)$, which gives the following ideal:
$$
I=I(\X)\cap (x_1,x_3)^2=(x_1(x_1x_2^2+x_1^2x_2),x_1(x_1x_2x_3+x_2^2x_3),x_1x_3^4+x_1^4x_3,x_3(x_2x_3^4+x_2^4x_3)).
$$
We can define $I_4=I+I(\PP^2)$ and it is easy to check with Macaulay2 \cite{M2} that $I_4\subsetneq I(\X)$, $\height(I_4)=\height(I(\X))=2$ and $\deg(S/I_4)=\deg(S/I(\X))$, which contradicts \cite[Lem. 2.10 (b)]{geramita}. Increasing the multiplicity of any primary component of $I(\X)$, besides $(x_1,x_2)$, we get more examples of ideals $I$ such that $I_4=I+I(\PP^2)$ is not saturated and has the same degree and height as $I(\X)$. Note that this does not contradict Lemma \ref{unmixgrado} since $I_4$ is not unmixed. 
\end{ex}

\bibliographystyle{abbrv}

\end{document}